\theoremstyle{definition}
 \newtheorem{definition}{Definition}[section]
\theoremstyle{plain}
\theoremstyle{plain}
 \newtheorem{theorem}[definition]{Theorem}
\theoremstyle{definition}
\theoremstyle{plain}
\theoremstyle{plain}
 \newtheorem{corollary}[definition]{Corollary}
\theoremstyle{remark}
 \newtheorem{remark}[definition]{Remark}
\theoremstyle{definition}
\theoremstyle{plain}
\newcommand{\Ext}{\mathrm{Ext}}
\newcommand{\End}{\mathrm{End}}
\newcommand{\Hom}{\mathrm{Hom}}
\newcommand{\Ca}{\mathcal{C}}
\newcommand{\Fun}{\mathrm{F}}
\newcommand{\Def}{\mathrm{Def}}
\newcommand{\Sets}{\mathrm{Sets}}
\newcommand{\SEnd}{\underline{\End}}
\newcommand{\A}{\Lambda}
\newcommand{\m}{\mathfrak{m}}
\renewcommand{\k}{\Bbbk}
\newcommand{\invlim}{\varprojlim}
\title[A note on deformations of finite dimensional modules]{A note on deformations of finite dimensional modules over $\k$-algebras} 
\thanks{}
\author[Rizzo]{Pedro Rizzo}
\address{Instituto de Matem\'aticas, Universidad de Antioquia, Medell{\'\i}n, Antioquia, Colombia}
\email{pedro.hernandez@udea.edu.co}
\author[V\'elez-Marulanda]{Jos\'e A. V\'elez-Marulanda}
\address{Department of Mathematics, Valdosta State University, Valdosta, GA,  United States of America}
\email{javelezmarulanda@valdosta.edu (Corresponding author)}
\keywords{algebras over a field \and weak universal deformation rings}
\begin{document}
\renewcommand{\labelenumi}{\textup{(\roman{enumi})}}
\renewcommand{\labelenumii}{\textup{(\roman{enumi}.\alph{enumii})}}
\numberwithin{equation}{section}

\begin{abstract}

Let $\k$ be a field, and let $\A$ be a (not necessarily finite dimensional) $\k$-algebra. Let $V$ be a left $\A$-module such that is finite dimensional over $\k$. Assume further that $V$ has a weak universal deformation ring $R^w(\A,V)$, which is a complete Noetherian commutative local $\k$-algebra with residue field $\k$.  We prove in this note that under certain conditions on the $\A$-module $V$, that if $R^w(\A,V)$ is a quotient of $\k[\![t]\!]$, then $R^w(\A,V)$ is either isomorphic to $\k$, or $\k[\![t]\!]$, or to $\k[\![t]\!]/(t^N)$ for some integer $N\geq 2$.  
\end{abstract}
\subjclass[2010]{16G10 \and 16G20 \and 20C20}
\maketitle

\section{Introduction}

Assume that $\A$ is a finite dimensional self-injective $\k$-algebra.  If $V$ has stable endomorphism ring is isomorphic to $\k$, then it follows from the results in \cite{blehervelez} that the weak deformation functor $\widehat{\Fun}_V^w(-)$ is naturally equivalent to the deformation functor $\widehat{\Fun}_V(-)$  and that $V$ has a universal deformation ring $R(\A,V)$. Consequently, $V$ has also a weak universal deformation ring $R^w(\A,V)$ that is isomorphic to $R(\A,V)$. 
Moreover, in \cite{fonce-giraldo-rizzo-velez}, the authors obtain the same result for the case when $\A= \widehat{\Gamma}$, which is the repetitive algebra of a finite-dimensional $\k$-algebra $\Gamma$ (as defined in \cite{wald}). It should be noted that in this particular situation, $\A$ is an infinite-dimensional $\k$-algebra. Furthermore, in \cite{blehervelez}, \cite{calderon-giraldo-rueda-velez}, \cite{fonce-giraldo-rizzo-velez}, and \cite{velez}, several combinatorial methods from the representation theory of algebras were employed to compute universal deformation rings of modules whose stable endomorphism ring is isomorphic to $\k$. It is worth mentioning that many of these calculations utilize similar methods or involve adjustments from one situation to the other. For instance, we refer to \cite[Proof of Claim 4.5]{calderon-giraldo-rueda-velez} and \cite[Proof of Theorem 1.2]{fonce-giraldo-rizzo-velez} for examples of such similarities and adaptations.

In this note, we present a general criterion and a unified technique for computing universal deformation rings under broad assumptions, encompassing the aforementioned results as special cases. Moreover, these calculations can be applied to other scenarios as well. Our objective in this note is to establish the following result.

\begin{theorem}\label{thm1}
Let $\A$ be a (not necessarily finite dimensional) $\k$-algebra and $V$ be a left $\A$-module with $\dim_\k V<\infty$. Assume that $V$ has a weak universal deformation ring $R^w(\A,V)$ which is a quotient of the ring of formal series $\k[\![t]\!]$, and that there exists an ordered sequence of finite dimensional left $\A$-modules $\mathscr{L}_V= \{V_0,V_1,\ldots\}$ with $V_0 = V$ and such that for each $\ell \geq 1$, there exist a surjective morphism  $\epsilon_\ell: V_\ell\to V_{\ell-1}$, and an injective morphism $\iota_\ell: V_{\ell-1}\to V_\ell$ such that the composition $\sigma_\ell = \iota_\ell\circ \epsilon_\ell$ satisfies that $\ker \sigma_\ell = V_0$, $\mathrm{im}\,\sigma_\ell^\ell \cong V_0$, $\sigma_\ell^{\ell+1}=0$, and $\mathscr{L}_V$ is maximal with respect to these properties. 
\begin{enumerate}
\item If $\mathscr{L}_V$ is finite, and its last element, say $V_N$, satisfies  $\dim_\k\Hom_\A(V_N,V) =1$ and $\Ext_\A^1(V_N,V)=0$, then  $R^w(\A,V)\cong \k[\![t]\!]/(t^{N+1})$.
\item If $\mathscr{L}_V$ is infinite, then $R^w(\A,V)\cong \k[\![t]\!]$.
\end{enumerate}   
\end{theorem}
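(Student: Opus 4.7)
The plan is to realize each module $V_\ell$ in $\mathscr{L}_V$ as a deformation of $V$ over the Artinian quotient $R_\ell := \k[\![t]\!]/(t^{\ell+1})$, to use the weak universal property of $R^w(\A,V)$ to produce surjective morphisms $\tau_\ell: R^w(\A,V) \twoheadrightarrow R_\ell$, and then to read off $R^w(\A,V)$ from the length of $\mathscr{L}_V$ combined with the assumption that $R^w(\A,V)$ is a quotient of $\k[\![t]\!]$.

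Since $\sigma_\ell^{\ell+1}=0$, declaring $t\cdot v := \sigma_\ell(v)$ equips $V_\ell$ with the structure of an $\A \otimes_\k R_\ell$-module. The filtration $V_\ell \supseteq \im\sigma_\ell \supseteq \cdots \supseteq \im\sigma_\ell^\ell \supseteq 0$, together with $\ker\sigma_\ell = V_0$ and $\im\sigma_\ell^\ell \cong V_0$, yields inductively the dimension formula $\dim_\k \im\sigma_\ell^k = (\ell+1-k)\dim_\k V$; in particular $V_\ell$ is $R_\ell$-free of rank $\dim_\k V$. The inclusion $\im\sigma_\ell \subseteq \ker\sigma_\ell^\ell$ and equality of dimensions force $\im\sigma_\ell = \ker\sigma_\ell^\ell$, so $\sigma_\ell^\ell$ descends to an $\A$-module isomorphism $V_\ell / \im\sigma_\ell \xrightarrow{\sim} \im\sigma_\ell^\ell \cong V_0 = V$, identifying $V_\ell/tV_\ell$ with $V$ and establishing that $V_\ell$ is a lift of $V$ over $R_\ell$.

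By the weak universal property this lift determines a morphism $\tau_\ell: R^w(\A,V) \to R_\ell$, and the condition $\sigma_\ell^\ell \neq 0$ ensures that $V_\ell$ is a ``faithful'' $R_\ell$-deformation (not obtained by pushforward from a strictly smaller quotient of $\k[\![t]\!]$), which forces $\tau_\ell$ to be surjective. In case (ii), surjections $\tau_\ell$ for every $\ell \geq 0$ are incompatible with $R^w(\A,V)$ being $\k[\![t]\!]/(t^{m+1})$ for any $m$, so we conclude $R^w(\A,V) \cong \k[\![t]\!]$. In case (i), the surjection $\tau_N$ shows $R^w(\A,V) \twoheadrightarrow R_N$, and to rule out a surjection onto $R_{N+1}$ I argue by contradiction: such a surjection would allow us to lift $\tau_N$ across $R_{N+1} \twoheadrightarrow R_N$ and push the weak universal lift forward to a deformation $\tilde V$ of $V$ over $R_{N+1}$ restricting to $V_N$. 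Equipping $\tilde V$ with the quotient $\tilde V \twoheadrightarrow \tilde V/t^{N+1}\tilde V \cong V_N$ and the injection $V_N \hookrightarrow \tilde V$ sending $[v] \mapsto tv$ would then produce a valid $V_{N+1}$, contradicting the maximality of $\mathscr{L}_V$.

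The hypotheses $\dim_\k\Hom_\A(V_N,V) = 1$ and $\Ext^1_\A(V_N,V) = 0$ enter precisely at this last step: they ensure that the restriction of the constructed lift $\tilde V$ to $R_N$ is genuinely isomorphic to $V_N$ as an $\A$-module (rather than to some other weakly equivalent but distinct lift), so that the surjection $\tilde V \twoheadrightarrow V_N$ really does land in our chosen $V_N$ and the new term extends $\mathscr{L}_V$ on the nose. I expect the main technical obstacle to lie exactly in this identification — in particular, carefully handling the distinction between weak and strong deformation equivalences when lifting $\tau_N$ to $R_{N+1}$ and pinning down the restricted lift as $V_N$ itself.
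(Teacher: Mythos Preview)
Your overall architecture matches the paper's: turn each $V_\ell$ into a lift of $V$ over $R_\ell=\k[\![t]\!]/(t^{\ell+1})$, obtain $\tau_\ell:R^w(\A,V)\to R_\ell$, and then read off $R^w(\A,V)$. Two points, however, diverge from the paper and deserve correction.

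\textbf{Surjectivity of $\tau_\ell$.} Your claim that ``$\sigma_\ell^\ell\neq 0$ makes $V_\ell$ a faithful $R_\ell$-deformation, hence $\tau_\ell$ is surjective'' is not valid. The trivial lift $R_\ell\otimes_\k V$ is free (hence faithful) over $R_\ell$ and has $t^\ell$ acting nontrivially, yet it corresponds to the \emph{non}-surjective structure map $R^w\to\k\hookrightarrow R_\ell$. Faithfulness of the module does not force surjectivity of the classifying map. The paper instead argues via cotangent spaces (Remark~\ref{remexam}): one first checks that the map $\theta':R^w\to R_1$ classifying $V_1$ is surjective (because the deformation over $\k[\epsilon]$ is nontrivial), and then uses $\theta'=\pi_{N+1,2}\circ\theta$ and the fact that $\pi_{N+1,2}^\ast$ is an isomorphism on cotangent spaces to conclude that $\theta^\ast$, hence $\theta$, is surjective. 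For part~(ii) the paper does not use your family $\{\tau_\ell\}$ at all; it builds the single lift $W=\varprojlim_\ell V_\ell$ over $\k[\![t]\!]$ and shows the resulting $\theta:R^w\to\k[\![t]\!]$ is surjective by the same cotangent-space trick.

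\textbf{Role of the hypotheses in (i).} You locate the use of $\dim_\k\Hom_\A(V_N,V)=1$ and $\Ext^1_\A(V_N,V)=0$ in ``ensuring the restriction of $\tilde V$ to $R_N$ is genuinely isomorphic to $V_N$.'' This is a misdiagnosis: weak deformations are already isomorphism classes of $R\A$-modules, so once you have lifted $\tau_N$ to $\psi:R^w\to R_{N+1}$, the restriction $\tilde V/t^{N+1}\tilde V$ is automatically $R_N\A$-isomorphic to $V_N$. The paper uses the two hypotheses for something entirely different. Writing $M_0$ for the lift over $R_{N+1}$, one has a short exact sequence $0\to V_0\to M_0\to V_N\to 0$ of $\A$-modules; the vanishing of $\Ext^1_\A(V_N,V_0)$ splits it, so $M_0\cong V_0\oplus V_N$ and $t\cdot(v,x)=(\mu(x),\sigma_N(x))$ for some $\mu\in\Hom_\A(V_N,V_0)$. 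The one-dimensionality of $\Hom_\A(V_N,V_0)$ then forces $\mu=c\,\sigma_N^N$, whence $\ker\mu=tV_N$ and $t^{N+1}M_0=0$, contradicting freeness of $M_0$ over $R_{N+1}$. Your alternative idea of contradicting maximality of $\mathscr{L}_V$ by exhibiting $\tilde V$ as a valid $V_{N+1}$ is an interesting route, but note that if it worked as you describe it would render both extra hypotheses superfluous; at minimum you should explain why the hypotheses are then present, or else carry out the paper's splitting argument.
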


\section{Preliminaries}

As before, we assume that $\k$ is a fixed field of arbitrary characteristic. We denote by $\widehat{\Ca}$ the category of all complete local commutative Noetherian $\k$-algebras with residue field $\k$. In particular, the morphisms in $\widehat{\Ca}$ are continuous $\k$-algebra homomorphisms that induce the identity map on $\k$.  Let $\A$ be a fixed and not necessarily finite dimensional  $\k$-algebra, and let $R$ be a fixed but arbitrary object in $\widehat{\Ca}$.  We denote by $R\A$ the tensor product of $\k$-algebras $R\otimes_\k\A$. Let $V$ be a fixed left $\A$-module with $\dim_\k V <\infty$. A {\it weak lift} of $V$ over an object $R$ in $\widehat{\Ca}$ is a finitely generated left $R\A$-module $M$ that is also free over $R$ such that $\k\otimes_R M\cong V$ as left $\A$-modules. A {\it weak deformation} of $V$ over $R$ is a isomorphism class $[M]$ of weak lifts of $V$ over $R$. We denote by $\Def^w_\A(V,R)$ the set of all weak deformations of $V$ over $R$. The {\it weak deformation functor} of $V$ is the covariant functor $\widehat{\Fun}_V^w: \widehat{\Ca}\to \Sets$ that sends every object $R$ in $\widehat{\Ca}$ to $\Def^w_\A(V,R)$, and which sends any morphism $\theta: R\to R'$ in $\widehat{\Ca}$ to the morphism $\widehat{\Fun}_V^w(\theta): \Def_\A^w(V,R)\to \Def_\A^w(V,R')$ that is defined as $\widehat{\Fun}_V^w([M])=[R'\otimes_{R,\theta}M]$. If $R$ is the ring of dual numbers $\k[\epsilon]$ with $\epsilon^2=0$, then $t_V=\Fun_V(\k[\epsilon])$ is called the {\it tangent space} of $\Fun_V$. 

Assume that there exists an object $R^w(\A,V)$ in $\widehat{\Ca}$ that represents $\widehat{\Fun}_V^w(-)$ in the sense that there is a natural equivalence between the functors $\widehat{\Fun}_V^w(-)$ and $\Hom_{\widehat{\Ca}}(R^w(\A,V), -)$. In this situation, we call $R^w(\A,V)$ the {\it weak universal deformation ring} of $V$.  Assume further that there exists an isomorphism of $\k$-vector spaces $t_V\cong \Ext_\A^1(V,V)$. If $\dim_\k t_V=n$, then it follows that $R^w(\A,V)$ is a quotient of the ring of formal power series $\k[\![t_1,\ldots,t_n]\!]$. In this note, we are interested in finite dimensional $\A$-modules $V$ that have a weak deformation ring $R^w(\A,V)$ that is a quotient of $\k[\![t]\!]$.

\begin{remark}\label{remexam}
In order to prove the main result in this note, we need the following definition and property of morphisms between objects in $\widehat{\Ca}$. Following \cite{sch}, if $R$ is an object in $\widehat{\Ca}$, we denote by $t^\ast_R$ the quotient $\m_R/\m_R^2$ and call it the  {\it Zariski cotangent space} of $R$ over $\k$. Let $\theta: R\to R'$ be a morphism in $\widehat{\Ca}$.  It follows by \cite[Lemma 1.1]{sch} that $\theta$ is surjective if and only if the induced map of cotangent spaces $\theta^\ast: t^\ast_R\to t^\ast_{R'}$ is surjective. 
\end{remark}

\section{Proof of main result}
\begin{proof}[Proof of Theorem \ref{thm1}] Let $\ell \geq 1$ be a fixed integer and assume that $V_\ell$ and $\sigma_\ell$ are as in the hypothesis of Theorem \ref{thm1}. It follows that the $\A$-module  $V_\ell$ is naturally a $\k[\![t]\!]/(t^{\ell+1})\otimes_\k\A$-module by letting $t$ act on $x\in V_\ell$ as $t\cdot x = \sigma_\ell(x)$. In particular, $tV_\ell\cong V_{\ell-1}$. Assume that $d=\dim_\k V$ and let $\{\bar{r}_1,\ldots,\bar{r}_d\}$ be a fixed basis of $V$ over $\k$. By using the isomorphism $V_\ell/tV_\ell\cong V_0$, we can lift the elements $\bar{r}_1,\ldots,\bar{r}_d$ to corresponding elements $r_1,\ldots,r_d\in V_\ell$ that are linearly independent over $\k$ and such that $\{t^sr_1,\ldots,t^sr_d: 1\leq s\leq \ell\}$ is a $\k$-basis of $tV_\ell\cong V_{\ell-1}$, which implies that $\{r_1,\ldots,r_d\}$ is  a $\k[\![t]\!]/(t^{\ell+1})$-basis of $V_\ell$, i.e. $V_\ell$ is free over $\k[\![t]\!]/(t^{\ell+1})$. Note also that $V_\ell$ lies in a short exact sequence of $\A$-modules
\begin{equation*}
0\to tV_\ell\to V_\ell\to \k\otimes_{\k[\![t]\!]/(t^{\ell+1})}V_\ell\to 0,
\end{equation*}
which implies that there exists an isomorphism of $\A$-modules $\phi_\ell:\k\otimes_{\k[\![t]\!]/(t^{\ell+1})}V_\ell\to V_0$, which implies that $V_\ell$ induces a weak lift of $V_0$ over $\k[\![t]\!]/(t^{\ell+1})$. 

In order to prove (i), assume next that $\mathscr{L}_V$ is finite and that the last term of this sequence, say $V_N$, satisfies that $\dim_\k\Hom_\A(V_N,V_0)=1$ and $\Ext_\A^1(V_N,V_0)=0$. Consider the weak lift $V_N$ of $V_0$ over $\k[\![t]\!]/(t^{N+1})$. Since $\widehat{\Fun}_V^w(-)$ is representable, it follows that there exists a unique morphism $\theta: R^w(\A, V_0)\to \k[\![t]\!]/(t^{N+1})$ in $\widehat{\Ca}$ such that 
\begin{equation*}
V_N\cong \k[\![t]\!]/(t^{N+1})\otimes_{R^w(\A,V_0), \theta}U(\A,V_0),
\end{equation*}
where $U(\A,V_0)$ is the weak lift corresponding to the weak deformation of $V_0$ over $R^w(\A,V_0)$. On the other hand, since $V_1$ is a weak lift of $V_0$ over $\k[\![t]\!]/(t^2)$, there exists a unique morphism $\theta':R^w(\A,V_0)\to \k[\![t]\!]/(t^2)$ in $\widehat{\Ca}$ such that  
\begin{equation*}
V_1\cong \k[\![t]\!]/(t^2)\otimes_{R^w(\A,V_0),\theta'}U(\A,V_0).
\end{equation*}
By considering the natural projection $\pi_{N+1,2}: \k[\![t]\!]/(t^{N+1})\to \k[\![t]\!]/(t^2)$ and the weak lift $(U',\phi_{U'})$  of $V_0$ over $\k[\![t]\!]/(t^2)$ corresponding to the morphism $\pi_{N+1,2}\circ \theta$, we obtain 
\begin{align*}
U'&\cong \k[\![t]\!]/(t^2)\otimes_{R(\A_N,V_0),\pi_{N+1,2}\circ \theta}U(\A_N,V_0)\\
&\cong \k[\![t]\!]/(t^2)\otimes_{\k[\![t]\!]/(t^{N+1}),\pi_{N+1,2}}\left(\k[\![t]\!]/(t^N)\otimes_{R(\A_N,V_0),\theta}U(\A_N,V_0)\right)\\
&\cong \k[\![t]\!]/(t^2)\otimes_{\k[\![t]\!]/(t^{N+1}),\pi_{{N+1},2}}V_N\\
&\cong V_N/t^2V_N\cong V_1. 
\end{align*} 
The uniqueness of $\theta'$ implies that $\theta'= \pi_{N+1,2}\circ \theta$, and since $\theta'$ is surjective, it follows that $\theta$ is also surjective. We claim that $\theta$ is an isomorphism. If this is false, then there exists a surjective $\k$-algebra homomorphism $\theta_0: R^w(\A,V_0)\to \k[\![t]\!]/(t^{N+2})$ in $\widehat{\Ca}$ such that $\pi_{N+2,N+1}\circ \theta$, where $\pi_{N+2,N+1}: \k[\![t]\!]/(t^{N+2})\to \k[\![t]\!]/(t^{N+1})$ is the natural projection. Let $M_0$ be a weak lift of $V_0$ over  $\k[\![t]\!]/(t^{N+2})$ corresponding  to $\theta_0$. Since the kernel of $\pi_{N+2,N+1}$ is $(t^{N+1})/(t^{N+2})$, it follows that $M_0/t^{N+1}M_0\cong V_N$. Consider the $\k[\![t]\!]/(t^{N+2})\otimes_\k\A$-module homomorphism $g:M_0\to t^{N+1}M_0$ defined as $g(x)=t^{N+1}x$ for all $x\in M_0$. Since $M_0$ is free over $\k[\![t]\!]/(t^{N+2})$ it follows that the kernel of $g$ is isomorphic to $tM_0$. Thus, $M_0/tM_0\cong t^{N+1}M_0$ for $g$ is a surjection. Therefore $t^{N+1}M_0\cong V_0$, and thus we obtain a short exact sequence of $\k[\![t]\!]/(t^{N+2})\otimes_\k\A$-modules 
\begin{equation}\label{eqn1}
0\to V_0\to M_0\to V_N\to 0.
\end{equation}

Since by assumption we have $\Ext_\A^1(V_N,V_0)=0$, it follows that the sequence (\ref{eqn1}) splits as a sequence of $\A$-modules. Hence, $M_0=V_0\oplus V_N$ as $\A$-modules. Identifying the elements of $M_0$ as $(v,x)$ with $v\in V_0$ and $x\in V_N$, we see that $t$ acts on $(v,x)\in M_0$ as $t\cdot(v,x)=(\mu(x),\sigma_N(x))$, where $\mu :V_N\to V_0$ is a surjective $\A$-module homomorphism. Since the surjection $\sigma_N^N: V_N\to V_0$ is non-zero and since by hypothesis $\dim_\k\Hom_\A(V_N,V_0)=1$, it follows that there exists $c\in \k^\ast$ such that $\mu = c\sigma_N^N$, which implies that the kernel of $\mu$ is $tV_N$. Therefore $t^{N+1}(v,x)=(\mu(t^Nx),\sigma_N^{N+1}(x))=(0,0)$ for all $v\in V_0$ and $x\in V_N$. This contradicts that $t^{N+1}M_0\cong V_0$. Thus $\theta: R^w(\A,V_0)\to \k[\![t]\!]/(t^{N+1})$ is an isomorphism. This proves (i).  

Next assume that $\mathscr{L}_V$ is infinite, and let $W=\invlim_\ell V_\ell$. By letting $t$ act on $W$ as $\invlim\epsilon_\ell$, we obtain that $W$ is a $\k[\![t]\!]\otimes_\k\A$-module and $\k\otimes_RW\cong W/tW$, and by arguing as before, it follows that $W$ is a weak lift of $V_0$ over $\k[\![t]\!]$. Therefore, there exists a unique $\k$-algebra homomorphism $\theta: R^w(\A,V_0)\to  \k[\![t]\!]$ in $\widehat{\Ca}$ which corresponds to the weak deformation induced by $W$. Note that since $W/t^2W\cong V_1$ as $\A$-modules, we obtain that $W/t^2W$ defines a non-trivial lift of $V_0$ over $\k[\![t]\!]/(t^2)$ and thus there exists a unique morphism $\theta':R^w(\A,V_0)\to \k[\![t]\!]/(t^2)$. Note that since the cotangent space (as Remark \ref{remexam}) of $\k[\![t]\!]/(t^2)$ is $1$-dimensional over $\k$, it follows that $\theta'$ is also surjective. Moreover, if $\theta'':\k[\![t]\!]\to \k[\![t]\!]/(t^2)$ is the canonical projection, it follows by the uniqueness of $\theta'$ that $\theta'=\theta''\circ \theta$. Thus again by Remark \ref{remexam}, since $(\theta'')^\ast$ is an isomorphism, we obtain that $\theta^\ast$ and thus $\theta$ is surjective. Therefore by using that  $R^w(\A,V_0)$ is a quotient of $\k[\![t]\!]$, we conclude that $\theta$ is an isomorphism. This proves (ii) and finishes the proof of Theorem \ref{thm1}. 
\end{proof}

\begin{corollary}\label{cor1}
Under the situation of Theorem \ref{thm1} (i), if $N = 0$, then $\Ext_\A^1(V,V)=0$ and $R^w(\A,V)\cong \k$.
\end{corollary}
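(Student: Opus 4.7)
The plan is a direct specialization of Theorem \ref{thm1}(i) to $N=0$. In that regime, the sequence $\mathscr{L}_V$ is the singleton $\{V_0\}$ with $V_0=V$, so its last element $V_N$ coincides with $V$ itself. The hypothesis of Theorem \ref{thm1}(i) therefore reads $\dim_\k\Hom_\A(V,V)=1$ and $\Ext_\A^1(V,V)=0$, and the latter equality is precisely the first assertion of the corollary; in other words, the vanishing of $\Ext_\A^1(V,V)$ is built into the assumption that we are in the situation of Theorem \ref{thm1}(i) with $N=0$.

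For the second assertion, Theorem \ref{thm1}(i) yields $R^w(\A,V)\cong \k[\![t]\!]/(t^{N+1})$, which with $N=0$ collapses to $R^w(\A,V)\cong \k[\![t]\!]/(t)\cong \k$. Should one prefer an argument that does not run the machinery of the theorem in a degenerate case where there is no $V_1$ to work with, one can argue intrinsically: the tangent space of $\widehat{\Fun}_V^w$ embeds into $\Ext_\A^1(V,V)=0$ and is therefore trivial, so by Remark \ref{remexam} the Zariski cotangent space $t^\ast_{R^w(\A,V)}=\m_{R^w(\A,V)}/\m_{R^w(\A,V)}^2$ vanishes; since $R^w(\A,V)$ is complete, local, Noetherian, with residue field $\k$, this forces $\m_{R^w(\A,V)}=0$ and hence $R^w(\A,V)\cong \k$.

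No substantive obstacle is expected; the corollary is essentially bookkeeping, and its value lies in isolating the rigidity case as a clean recognition criterion for when the weak universal deformation ring equals the base field.
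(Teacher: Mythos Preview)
Your argument is correct under a literal reading of ``under the situation of Theorem~\ref{thm1}(i)'': if that phrase imports the hypothesis $\Ext_\A^1(V_N,V)=0$ with $V_N=V$, the first assertion is a tautology and the second is the specialization $\k[\![t]\!]/(t)\cong\k$. The paper, however, takes a genuinely different route for the first assertion: it \emph{derives} $\Ext_\A^1(V,V)=0$ from the maximality of $\mathscr{L}_V=\{V_0\}$ alone, by contradiction. A nonzero class in $\Ext_\A^1(V,V)$ would be represented by a non-split extension $0\to V_0\xrightarrow{\iota_1}V_1\xrightarrow{\epsilon_1}V_0\to 0$, and one checks that $\sigma_1=\iota_1\circ\epsilon_1$ satisfies $\ker\sigma_1\cong V_0$, $\mathrm{im}\,\sigma_1\cong V_0$, and $\sigma_1^2=0$; hence $V_1$ would enlarge $\mathscr{L}_V$, contradicting maximality. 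Thus the paper establishes that the Ext hypothesis in Theorem~\ref{thm1}(i) is \emph{automatic} when $N=0$, which is the substantive content of the first assertion---content your reading erases. For the second assertion both approaches coincide (invoke Theorem~\ref{thm1}(i) with $N=0$); your alternative intrinsic argument via the vanishing cotangent space is also valid, though note that the tangent space of the \emph{weak} deformation functor is a quotient of $\Ext_\A^1(V,V)$ rather than a subspace---either way it is trivial once $\Ext_\A^1(V,V)=0$.
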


\begin{proof}
Note that if $N=0$, then $\mathscr{L}_V=\{V=V_0\}$. Assume that $\Ext_\A^1(V,V)\not=0$, i.e. there exists a non-splitting short exact sequence of $\A$-modules 
\begin{equation}
0\to V_0\xrightarrow{\iota_1}V_1\xrightarrow{\epsilon_1}V\to 0.
\end{equation}
Note that the morphism $\sigma_1=\iota_1\circ \epsilon_1$ satisfies $\sigma_1^2 =0$, $\ker \sigma_1\cong V_0$ and $\mathrm{im}\,\sigma_1\cong V_0$. Thus $\{V_0,V_1\}\subset \mathscr{L}_V$ which is a contradiction. Thus $\Ext_\A^1(V,V)=0$. The second affirmation of Corollary \ref{cor1} follows directly from Theorem \ref{thm1} (i).  
\end{proof}

\begin{remark}
\begin{enumerate}
\item Assume that $\A$ is either a finite dimensional self-injective $\k$-algebra or the repetitive algebra of a finite dimensional $\k$-algebra. If $V$ is a finite dimensional module such that $\SEnd_\A(V)=\k$, then $V$ has a universal deformation ring $R(\A,V)$ (in the sense of \cite{blehervelez}) and moreover, $V$ also has a weak universal deformation ring $R^w(\A,V)$ with $R^w(\A,V)\cong R(\A,V)$.  Therefore, Theorem \ref{thm1} recovers the results in e.g. \cite[Claims 4.4 \& 4.5]{calderon-giraldo-rueda-velez}, \cite[Theorem 1.2 (ii), (iii)]{fonce-giraldo-rizzo-velez}), \cite[Claims 4.1.1 \& 4.1.2]{velez}, which involve symmetric (thus self-injective) special biserial $\k$-algebras (in the sense of \cite{wald}).
\item The condition concerning the weak universal deformation ring $R^w(\A,V)$ being a quotient of the ring of formal series $\k[\![t]\!]$ is commonly encountered, as demonstrated by the aforementioned results. In fact, a more commonly encountered condition, which implies this, is when the dimension of the tangent space $t_V \cong \Ext_\A^1(V,V)$ is equal to one as a $\k$-vector space.
\end{enumerate}
\end{remark}

\bibliographystyle{amsplain}
\bibliography{Deformation_modules_algebras}   

\providecommand{\bysame}{\leavevmode\hbox to3em{\hrulefill}\thinspace}
\providecommand{\MR}{\relax\ifhmode\unskip\space\fi MR }
\providecommand{\MRhref}[2]{%
  \href{http://www.ams.org/mathscinet-getitem?mr=#1}{#2}
}
\providecommand{\href}[2]{#2}
\begin{thebibliography}{1}

\bibitem{blehervelez}
F.~M. Bleher and J.~A. V{\'e}lez-Marulanda, \emph{Universal deformation rings
  of modules over {F}robenius algebras}, J. Algebra \textbf{367} (2012),
  176--202.

\bibitem{calderon-giraldo-rueda-velez}
Y.~Calder\'{o}n-Henao, H.~Giraldo, R.~Rueda-Robayo, and J.~A.
  V\'{e}lez-Marulanda, \emph{Universal deformation rings of string modules over
  certain class of self-injective special biserial algebras}, Comm. Algebra
  \textbf{47} (2019), no.~12, 4969--4988. \MR{4019318}

\bibitem{fonce-giraldo-rizzo-velez}
A.~Fonce-Camacho, H.~Giraldo, P.~Rizzo, and J.~A. V{\'e}lez-Marulanda, \emph{On
  a deformation theory of finite dimensional modules over repetitive algebras},
  Algebr. Represent. Theory \textbf{26} (2023), no.~1, 1--22.

\bibitem{sch}
M.~Schlessinger, \emph{Functors of {A}rtin rings}, Trans. Amer. Math. Soc.
  \textbf{130} (1968), 208--222.

\bibitem{velez}
J.~A. V{\'e}lez-Marulanda, \emph{Universal deformation rings of strings modules
  over a certain symmetric special biserial algebra}, Beitr. Algebra Geom.
  \textbf{56} (2015), no.~1, 129--146.

\bibitem{wald}
B.~Wald and J.~{W}aschb{\"u}sch, \emph{Tame biserial algebras}, J. Algebra
  \textbf{95} (1985), 480--500.

\end{thebibliography}
\end{document}